\title[Some geometric properties of matrix means]{SOME GEOMETRIC PROPERTIES OF MATRIX MEANS WITH RESPECT TO DIFFERENT DISTANCE FUNCTIONS}
\author[T. H. Dinh]{Trung Hoa Dinh}
\address{Department of Mathematics  \\ Troy University \\ Troy, Alabama 36082, USA}
\email{thdinh@troy.edu}
\author[R. Dumitru]{Raluca Dumitru}
\address{Department of Mathematics and Statistics \\ University of North Florida  \\ Jacksonville, FL 32224}
\email{raluca.dumitru@unf.edu}
\author[J. A. Franco]{Jose A. Franco}
\address{Department of Mathematics and Statistics \\ University of North Florida  \\ Jacksonville, FL 32224}
\email{jose.franco@unf.edu}
\subjclass[2010]{47A63, 47A56}
\keywords{Function distances, geometric mean, in-betweenness property, monotonicity, in-sphere property.}
\theoremstyle{plain}
\newtheorem{theorem}{Theorem}
\newtheorem{corollary}[theorem]{Corollary}
\newtheorem{remark}[theorem]{Remark}
\newtheorem{proposition}[theorem]{Proposition}
\def\Tr{{\rm Tr\,}}
\def\M{{\mathbb{M}\,}}
\theoremstyle{definition}
\begin{document}

\maketitle

\begin{abstract}
In this paper we study the monotonicity, in-betweenness and in-sphere properties of matrix means with respect to Bures-Wasserstein, Hellinger and Log-Determinant metrics. More precisely, we show that the matrix power means (Kubo-Ando and non-Kubo-Ando extensions) satisfy the in-betweenness property in the Hellinger metric. We also show that for two positive definite matrices $A$ and $B$, the curve of weighted Heron means, the geodesic curve of the arithmetic and the geometric mean lie inside the sphere centered at the geometric mean with the radius equal to half of the Log-Determinant distance between $A$ and $B$.
\end{abstract}

\maketitle

\section{Introduction}

Let $\M_n$ be the algebra of $n\times n$ matrices over $\mathbb{C}$ and $\mathcal{D}_n$ denote cone the positive definite elements of $\M_n$. Denote by $I$ the identity matrix of $\M_n$. For a real-valued function $f$ and a Hermitian matrix $A \in \M_n$ the matrix $f(A)$ is understood by means of the functional calculus. The space of density matrices or quantum states is as
$$\mathcal{D}_n^1=\{\rho \in \mathcal{D}_n \ | \ \Tr \rho =1 \}.$$

In \cite{Audenaert2}, Audenaert introduced the concept of ``in-betweenness" and distance monotonicity for matrix means as follows. A matrix mean $\sigma$ is said to satisfy the {\it in-betweenness property} with respect to the metric $d$ if for any pair of positive definite operators $A$ and $B$,
$$d(A, A\sigma B)\le d(A,B).$$

A weighted operator mean $\sigma_t$ is said to satisfy the {\it distance monotonicity} property with respect to the metric $d$ if for any pair of positive definite operators $A$ and $B$, the function $t\mapsto d(A,A\sigma_t B)$ is monotone on $[0, 1]$.

In the same article, he showed that the in-betweenness property is not stronger than the distance monotonicity for the matrix power means as defined by Bhagwat and Subramaninan \cite{bhagwat},
$$\mu_p(t;A,B):=(tA^p+(1-t)B^p)^{1/p}.$$

Using this comparison he showed that the weighted power means satisfy the in-betweenness property when $1\le p\le 2$ and $0\le t \le 1$. He later conjectured that this property should be satisfied for $p\ge 2$. However, in \cite{DDF} we constructed counterexamples for $p=6$. Moreover, we showed that the weighted power means satisfy the in-betweenness for $p=1/2$ and $p=1/4$. Interestingly, in the case of $p=1/2$, the property is satisfied with respect to any metric induced from a unitarily invariant norm, i.e., $d(A,B)=|||A-B|||.$

In \cite{HK} the first author and co-authors introduced the {\it in-sphere property} for matrix means with respect to some distance function $d$ on $\mathcal{D}_n$. A matrix mean $\sigma$ satisfies the in-sphere property with the center $X$ in a metric $d$ if 
$$
d(X, A\sigma B) \le \frac{1}{2}d(A, B).
$$
They showed that the matrix power mean $\mu_p(t;A,B)$ satisfies in-sphere property with respect to the Hilbert-Schmidt norm.

Another kind of the matrix power mean in the sense of Kubo-Ando \cite{Kubo-Ando} is defined as
$$P_p(t, A, B)= A^{1/2}\left (tI + (1-t)(A^{-1/2}BA^{-1/2})^p \right)^{1/p}A^{1/2}.$$
For $p=t=\dfrac{1}{2},$ 
$$
P_{\frac{1}{2}}(1/2, A, B) = 
\frac{1}{4}(A+B+A\sharp B), $$
where $A\sharp B$ is the midpoint of the geodesic curve of weighted geometric mean $A\sharp_t B = A^{1/2}(A^{-1/2}BA^{-1/2})^{t}A^{1/2}$ joining $A$ and $B$. The mean $P_{1/2}(1/2, A, B)$ is the arithmetic mean of the arithmetic mean and the geometric mean of $A$ and $B$, and is called {\it the Heron mean} in the sense of Kubo-Ando. This mean is the main object of the investigations in \cite{Bhatia4, hoa07}. 

Now for positive definite matrices $A$ and $B$, let us define the weighted matrix Heron mean as follows:
$$
H_t(A, B)= t A\sharp B + (1-t)A\nabla B,
$$
where $A\nabla B = (A+B)/2$ is the arithmetic mean of $A$ and $B$. Therefore, $P_{1/2}(1/2, A, B)$ is the intersection point of two curves $P_{1/2}(t, A, B)$ and $H_t(A, B).$ Note that the matrix power mean $P_p(t, A, B)$ joins $A$ and $B$ while the weighted Heron mean $H_t(A, B)$ joins $A\sharp B$ and $A\nabla B.$ 

In recent years there has been considerable interest in the manifold $\mathcal{D}_n$ of positive
definite matrices with the Riemannian metric:
$$
d_R(A, B) = \left( \sum_{i=1}^n \log^2 \lambda_i(A^{-1}B)\right)^{1/2},
$$
where $\lambda_i(A^{-1}B)$ are eigenvalues of the matrix $A^{-1/2} BA^{-1/2}.$

Besides this, there are other distances on $\mathcal{D}_n$ that are important in quantum information theory, signal processing, machine learning and other areas. For example:

\begin{itemize}
    \item Bures-Wasserstein distance in the theory of optimal transport \cite{BhatiaLim}:
    $$
d_b(A, B) = \left(\Tr(A+B) - 2\Tr((A^{1/2}BA^{1/2})^{1/2}) \right)^{1/2}.
$$
\item The Log-Determinant metric in machine learning and and quantum information \cite{sra}:
$$d_l(A, B) = \log \det \frac{A+B}{2}  - 2\log \det(AB).$$ 
\item The Hellinger metric or Bhattacharya metric in quantum information \cite{2016arXiv161103449S}:
$$d_h(A,B)=\big(\Tr(A+B)-2\Tr(A^{1/2}B^{1/2})\big)^{1/2}.$$
\end{itemize}

In this paper, we focus on the study of the monotonicity, in-betweenness and in-sphere properties with respect to Bures-Wasserstein, Hellinger and Log-Determinant metrics. 
The paper is organized as follows. In the next section we study the in-betweenness property of the matrix power means in the Hellinger and Bures-Wasserstein metrics. We show that both $\mu_p(t, A, B)$ and  $P_p(t, A, B)$ satisfy the in-betweenness property in the Hellinger metric $d_h$. 
In addition, we show that the Bures-Wasserstein and Hellinger metrics are equivalent in the cone of positive semidefinite matrices. 
A distance relation between $P_p(t, A, B), \ \mu_p(t, A, B)$ and quantum fidelity is also obtained. In the last section we study the monotonicity and in-sphere property in Log-Determinant metric. We establish a picture which shows that for positive definite matrices $A$ and $B$ three curves $A\sharp_t B$, $H_t(A, B)$ and $(A\sharp B)\sharp_t(A\nabla B)$ lie inside the sphere centered at $A\sharp B$ with the radius $\frac{1}{2}d_l(A, B).$




\section{In-betweenness property of the matrix power means in Bures-Wasserstein and Hellinger metrics}

In quantum information theory, two metrics are of special interest. These are the Bures-Wasserstein and Hellinger metrics. 
In this section,  we study in-betweenness of the matrix means with respect to both these metrics. 

In \cite{Audenaert2}, it was shown that for the power means $\mu_p(t;A,B)$, the ``in-betweenness" property implies distance monotonicity. In the following proposition, we show that this is also true for the Kubo-Ando extension of the power means $P_p(t;A,B)$.

\begin{proposition}\label{Prop4}
Let $A$ and $B$ be positive semidefinite matrices. Then, $d(A,P_p(t;A,B))\le d(A,P_p(s;A,B))$ for $0\le t\le s\le 1$ if and only if $d(A,P_p(r;A,B))\ge d(A,P_p(0;A,B))$ for $r\in[0,1]$ and any metric $d$ on $\mathcal{D}_n$.
\end{proposition}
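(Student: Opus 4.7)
The plan is to parallel Audenaert's argument for $\mu_p$ in \cite{Audenaert2} by exploiting a semigroup-style composition identity for the Kubo--Ando power mean in its first argument. Specifically, I would first verify that
\[
P_p\bigl(t;\,A,\,P_p(s;A,B)\bigr) \;=\; P_p\bigl(g(t,s);\,A,\,B\bigr),
\qquad g(t,s):=1-(1-t)(1-s).
\]
With $X:=A^{-1/2}BA^{-1/2}$, the definition of $P_p$ gives $A^{-1/2}P_p(s;A,B)A^{-1/2}=(sI+(1-s)X^p)^{1/p}$; raising this to the $p$-th power cancels the outer exponent, so substitution into the formula for $P_p(t;A,\cdot)$ collapses the nested expression to $A^{1/2}\bigl((t+s-ts)I+(1-t)(1-s)X^p\bigr)^{1/p}A^{1/2}$, which is precisely $P_p(g(t,s);A,B)$. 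For fixed $s\in[0,1]$, the map $t\mapsto g(t,s)$ is a monotone surjection of $[0,1]$ onto $[s,1]$, with $g(0,s)=s$; in particular, $P_p(0;A,Y)=Y$ for any $Y$.

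The forward implication is then immediate: specializing the monotonicity inequality at $t=0$ gives $d(A,P_p(0;A,B))\le d(A,P_p(s;A,B))$, which is the in-betweenness bound. For the reverse implication, I would fix $s\in[0,1]$ and apply the in-betweenness hypothesis to the new pair $(A,\,P_p(s;A,B))$ under the same metric $d$. This yields
\[
d\bigl(A,\,P_p(r;A,P_p(s;A,B))\bigr) \;\ge\; d\bigl(A,\,P_p(0;A,P_p(s;A,B))\bigr) \;=\; d\bigl(A,\,P_p(s;A,B)\bigr)
\]
for every $r\in[0,1]$. Rewriting the left-hand side via the composition identity converts this into $d(A,P_p(g(r,s);A,B))\ge d(A,P_p(s;A,B))$; as $r$ ranges over $[0,1]$, $g(r,s)$ sweeps the interval $[s,1]$, and the inequality becomes the claimed monotonicity at every pair $(s,\tau)$ with $s\le \tau$.

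The only real technical step is the composition identity itself; everything else is bookkeeping. The identity is an elementary calculation once one notices that the inner $(\cdot)^p$ cancels the outer $(\cdot)^{1/p}$ coming from $P_p(s;A,B)$, so I do not expect a serious obstacle. The phrase ``any metric $d$'' in the statement is used in the reverse direction to guarantee that the hypothesis remains available for the altered pair $(A,\,P_p(s;A,B))$ with the same $d$, which is implicit in the universal phrasing of the proposition.
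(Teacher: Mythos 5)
Your proof is correct and follows essentially the same route as the paper: the composition identity $P_p(t;A,P_p(s;A,B))=P_p(1-(1-t)(1-s);A,B)$ is exactly the paper's substitution $s=r+(1-r)t$ with $\hat{B}=P_p(t;A,B)$, and both arguments then apply the in-betweenness hypothesis to the shifted pair $(A,P_p(s;A,B))$. The only difference is cosmetic: you spell out the trivial forward implication, which the paper dismisses with ``it suffices to show the converse.''
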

\begin{proof}
It suffices to show the converse. Assume $0\le s\le t\le 1$ and let $C=A^{-1/2}B A^{-1/2}$. Write $\hat{C}=t I +(1-t)C^p$. Then,
$$P_p(t;A,B)=A^{1/2}\hat{C}^{1/p}A^{1/2}=:\hat{B}.$$
Define $s=r+(1-r)t$ and so,
\begin{align*}
    s I+(1-s)C^{p}& = (r+(1-r)t) I+(1-r)(1-t)C^{p} \\
     & =r I +(1-r)(tI+(1-t)C^{p}) \\
     & =r I +(1-r) \hat{C}.
\end{align*}
Then,
\begin{align*}
    P_p(r, A, \hat{B}) & =A^{1/2}(rI+(1-r)(A^{-1/2}(A^{1/2}\hat{C}^{1/p}A^{1/2})A^{-1/2})^p)^{1/p}A^{1/2}
    \\ &=A^{1/2}(rI+(1-r)\hat{C})^{1/p}A^{1/2}=P_p(s,A,B)
\end{align*}
Moreover, $$P_p(0,A,\hat{B})=\hat{B}.$$
Therefore, $d(A,P_p(t,A,B))\le d(A,P_p(s,A,B))$ reduces to 
$$d(A,P_p(0,A,\hat{B}))=d(A,P_p(t,A,B))\le d(A,P_p(s,A,B))=d(A,P_p(r, A, \hat{B})).$$
\end{proof}

\subsection{Hellinger Metric}

Taking advantage of operator convexity and concavity, on the interval $1/2\le p \le 1$, it is straightforward to show that the power means $\mu_p(t;A,B)$ satisfy the in-betweenness property.
\begin{theorem}
Let $A,B\in \mathcal{D}_n$, $1/2\le p \le 1$ and $0\le t \le 1$. Then,
$$d_h(A,\mu_p(t;A,B))\le d_h(A,B).$$
\end{theorem}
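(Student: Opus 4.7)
The plan is to convert the Hellinger distance into a Hilbert--Schmidt distance between square roots and then exploit operator convexity and operator concavity of the appropriate power functions. First I would use the identity
$$d_h(A,B)^2 = \Tr(A) + \Tr(B) - 2\Tr(A^{1/2}B^{1/2}) = \|A^{1/2} - B^{1/2}\|_2^2,$$
where $\|\cdot\|_2$ denotes the Hilbert--Schmidt norm. Writing $M = \mu_p(t;A,B)$, $X = A^p$, $Y = B^p$, and $q = 1/(2p)$, so that $A^{1/2} = X^q$, $B^{1/2} = Y^q$, and $M^{1/2} = (tX + (1-t)Y)^q$, the desired inequality becomes, after squaring and cancelling $\Tr(X^{2q})$,
$$\Tr(M) - \Tr(Y^{2q}) \le 2\Tr\bigl(X^q\bigl((tX + (1-t)Y)^q - Y^q\bigr)\bigr).$$
The hypothesis $p \in [1/2,1]$ forces $q \in [1/2,1]$ and $2q = 1/p \in [1,2]$, which is precisely the range where the relevant Löwner--Heinz inequalities apply in the directions I need.

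Next I would estimate each side separately. For the left-hand side, operator convexity of $z \mapsto z^{2q}$ on $(0,\infty)$ (valid for $2q \in [1,2]$) gives $M \le tX^{2q} + (1-t)Y^{2q}$ in the Löwner order, so taking trace yields
$$\Tr(M) - \Tr(Y^{2q}) \le t\bigl(\Tr(X^{2q}) - \Tr(Y^{2q})\bigr).$$
For the right-hand side, operator concavity of $z \mapsto z^q$ on $(0,\infty)$ (valid for $q \in [0,1]$) gives $(tX + (1-t)Y)^q - Y^q \ge t(X^q - Y^q)$. Multiplying by the positive operator $X^q$ and taking trace (using that $P,Q \ge 0$ imply $\Tr(PQ) = \Tr(P^{1/2}QP^{1/2}) \ge 0$) produces
$$\Tr\bigl(X^q\bigl((tX + (1-t)Y)^q - Y^q\bigr)\bigr) \ge t\bigl(\Tr(X^{2q}) - \Tr(X^q Y^q)\bigr).$$

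Combining the two estimates and dividing by $t$ (the case $t = 0$ being trivial since $\mu_p(0;A,B) = B$), the target inequality reduces to the scalar bound $2\Tr(X^q Y^q) \le \Tr(X^{2q}) + \Tr(Y^{2q})$, which is just $\Tr\bigl((X^q - Y^q)^2\bigr) \ge 0$. I do not foresee a serious obstacle; the main thing to keep straight is the \emph{direction} of each Löwner--Heinz inequality, since the upper bound on $\Tr(M)$ requires operator convexity (needing exponent $\ge 1$) while the lower bound on the cross term requires operator concavity (needing exponent $\le 1$), and it is precisely the range $p \in [1/2,1]$ that makes both conditions hold at once.
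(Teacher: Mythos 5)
Your proof is correct and is essentially the paper's own argument in different notation: the substitution $X=A^p$, $Y=B^p$, $q=1/(2p)$ repackages the same two key steps, namely operator convexity of $x\mapsto x^{1/p}$ to bound $\Tr(\mu_p(t;A,B))$ from above and operator concavity of $x\mapsto x^{1/(2p)}$ to bound the cross term from below, with everything collapsing to the trace AGM inequality $\Tr\bigl((A^{1/2}-B^{1/2})^2\bigr)\ge 0$. No gaps.
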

\begin{proof}
This result follows if
$$\Tr(\mu_p(t;A,B)-2A^{1/2}\mu_p(t;A,B)^{1/2})\le \Tr(B-2A^{1/2}B^{1/2}).$$
By the operator convexity of $x\mapsto x^{1/p}$ when $1/2\le p \le 1$,
$$\mu_p(t;A,B)\le t A+ (1-t)B.$$
Thus, the desired result follows if
$$\Tr(t(A-B)-2A^{1/2}\mu_p(t;A,B)^{1/2})\le -2\Tr(A^{1/2}B^{1/2}).$$
By the operator concavity of the map $x\mapsto x^{1/2p}$ when $1/2\le p \le 1$,
$$\mu_p(t;A,B)\ge t A^{1/2}+ (1-t)B^{1/2}.$$
Therefore, the distance monotonicity follows if
$$\Tr(t(A-B)-2(tA+(1-t)A^{1/2}B^{1/2}))\le -2\Tr(A^{1/2}B^{1/2}),$$
or,
$$t\Tr(2A^{1/2}B^{1/2}-(A+B))\le 0$$
which is nothing but the well-known AGM inequality.
\end{proof}

Notice that the mean $\mu_p(t, A, B)$ is a Kubo-Ando mean if and only if $p = \pm 1$. In the following theorem, we show that the Kubo-Ando extension of the power means $P_p(t;A,B)$ satisfies the in-betweenness property. Interestingly, we have counterexamples that show that some well-known Kubo-Ando means do not satisfy this property.  
\begin{theorem}\label{Thm4}
Let $A,B\in \mathcal{D}_n$ and $1/2\le p \le 1$. Then for any $t \in [0, 1]$,
$$d_h(A,P_p(t;A,B))\le d_h(A,B).$$
\end{theorem}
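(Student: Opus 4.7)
I would follow the same framework as the proof for $\mu_p$ in the previous theorem, expanding
$$d_h(A,P_p(t;A,B))^2 = \Tr(A) + \Tr(P_p(t;A,B)) - 2\Tr(A^{1/2}P_p(t;A,B)^{1/2})$$
and bounding the second and third terms separately. Set $C = A^{-1/2}BA^{-1/2}$ and $D = (tI + (1-t)C^p)^{1/p}$, so that $P_p(t;A,B) = A^{1/2}DA^{1/2}$.

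First, since $1 \le 1/p \le 2$, the function $x\mapsto x^{1/p}$ is operator convex on $[0,\infty)$, so applying it to the affine combination $tI + (1-t)C^p$ yields $D \le tI + (1-t)C$. Conjugating by $A^{1/2}$ gives $P_p(t;A,B) \le tA + (1-t)B$, hence
$$\Tr(P_p(t;A,B)) \le t\Tr(A) + (1-t)\Tr(B).$$

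Second, since $1/2 \le 1/(2p) \le 1$, the function $x\mapsto x^{1/(2p)}$ is operator concave on $[0,\infty)$, which gives $D^{1/2} \ge tI + (1-t)C^{1/2}$. I would use this to derive a lower bound for $\Tr(A^{1/2}P_p(t;A,B)^{1/2})$ comparable to $t\Tr(A) + (1-t)\Tr(A^{1/2}B^{1/2})$. Combined with the upper bound above, the difference $d_h(A, P_p(t;A,B))^2 - d_h(A,B)^2$ should collapse, via cancellation, to a non-positive multiple of $\Tr(A) + \Tr(B) - 2\Tr(A^{1/2}B^{1/2}) = d_h(A,B)^2$, i.e., to the trace AM-GM inequality.

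The main obstacle is transferring the inner inequality $D^{1/2} \ge tI + (1-t)C^{1/2}$ to the outer square root $P_p(t;A,B)^{1/2} = (A^{1/2}DA^{1/2})^{1/2}$, since the matrix square root does not commute with conjugation by $A^{1/2}$; one cannot simply replace $(A^{1/2}DA^{1/2})^{1/2}$ by $A^{1/2}D^{1/2}A^{1/2}$. I would attempt to handle this via the polar decomposition $A^{1/2}D^{1/2} = U|A^{1/2}D^{1/2}|$, which yields $(A^{1/2}DA^{1/2})^{1/2} = U|A^{1/2}D^{1/2}|U^{*}$, thereby recasting $\Tr(A^{1/2}P_p(t;A,B)^{1/2})$ as $\Tr(U^{*}A^{1/2}U\cdot |A^{1/2}D^{1/2}|)$ and estimating the latter using the operator concavity bound above together with standard trace inequalities (Cauchy--Schwarz / von Neumann). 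If that route proves too coarse, I would instead try to establish convexity of $t\mapsto d_h(A,P_p(t;A,B))^2$ on $[0,1]$, since then the endpoint values $d_h(A,B)^2$ at $t=0$ and $0$ at $t=1$ force $d_h(A,P_p(t;A,B))^2 \le (1-t)d_h(A,B)^2$ automatically.
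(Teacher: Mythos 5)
There is a real gap here, and in fact two. First, even if you could transfer the inner bound $D^{1/2}\ge tI+(1-t)C^{1/2}$ through the outer square root --- say via the (true) trace inequality $\Tr(A^{1/2}(A^{1/2}DA^{1/2})^{1/2})\ge\Tr(AD^{1/2})$ --- the resulting endpoint chain does not close. You would obtain $\Tr(A^{1/2}P_p(t;A,B)^{1/2})\ge t\Tr(A)+(1-t)\Tr(AC^{1/2})$ with $C=A^{-1/2}BA^{-1/2}$, and $\Tr(AC^{1/2})$ is in general \emph{smaller} than $\Tr(A^{1/2}B^{1/2})$ (this comparison is itself an Araki--Lieb--Thirring-type inequality), so the final bound $(1-t)\bigl(\Tr(A+B)-2\Tr(AC^{1/2})\bigr)$ can exceed $d_h(A,B)^2$ for small $t$. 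The endpoint strategy that works for $\mu_p$ --- where the square root of the mean is literally $(tA^p+(1-t)B^p)^{1/2p}$ and no conjugation intervenes --- cannot work verbatim for $P_p$. Second, your proposed repair of the transfer step is not a proof: von Neumann's inequality bounds $\Tr(U^{*}A^{1/2}U\,|A^{1/2}D^{1/2}|)$ from \emph{above}, whereas you need a lower bound, and the unitary $U$ from the polar decomposition varies with $t$ in a way you cannot control.

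Your fallback --- convexity of $t\mapsto d_h(A,P_p(t;A,B))^2$ on $[0,1]$ --- is exactly the route the paper takes, but it is where all the work lies, and your proposal contains no argument for it. The paper splits the quantity into $f(t)=\Tr(P_p(t;A,B))$, whose convexity follows easily from operator convexity of $x\mapsto x^{1/p}$ applied inside the conjugation, and $g(t)=\Tr(A^{1/2}P_p(t;A,B)^{1/2})$, whose concavity is the crux. That concavity rests on a genuinely new ingredient absent from your proposal: the log-majorization $\lambda(AH^{1/\alpha}A)\prec_{\log}\lambda(A^{1/2}(A^{\alpha/2}HA^{\alpha/2})^{1/\alpha}A^{1/2})$ for $\alpha\ge 1$, which the authors establish by reducing to the largest eigenvalue via Furuta's inequality and then passing to antisymmetric tensor powers. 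Without this lemma (or some substitute for it), neither your main plan nor your fallback goes through.
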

\begin{proof}
By a direct calculation, we obtain that the result follows if
$$\Tr(P_p(t;A,B)-2A^{1/2}P_p(t;A,B)^{1/2})\le \Tr(B-2A^{1/2}B^{1/2}).$$
Note that the inequality is equality at $t=0$. For $t=1$, the left-hand-side becomes $\Tr(-A)$, whereas the inequality becomes the well-known inequality 
$$ \Tr(A+B-2A^{1/2}B^{1/2})\ge 0.$$ Moreover, this implies that the linear interpolation between the end points of left-hand-side are always bounded by right-hand-side, namely, for $t\in (0,1)$,
$$\Tr(B-2A^{1/2}B^{1/2})\ge \Tr(-tA+(1-t)(B-2A^{1/2}B^{1/2})).$$
Thus, it suffices to show that the function $t\mapsto \Tr(P_p(t;A,B)-2A^{1/2}P_p(t;A,B)^{1/2})$ is convex on $(0,1)$. For this purpose, we show that the  function $f(t)= \Tr(P_p(t;A,B))$ is convex and the function  $g(t)=\Tr(A^{1/2}P_p(t;A,B)^{1/2})$ is concave. 

In the following, let $C=A^{-1/2}BA^{-1/2}$. Then we have
\begin{align*}
    f\left(\frac{t+s}{2}\right)& =\Tr\left(A^{1/2}\left(\frac{t+s}{2}I+\left(1-\frac{t+s}{2}\right)C^p\right)^{1/p}A^{1/2}\right)
\\ & =\Tr\left(A^{1/2}\left(\frac{1}{2}(t I+(1-t)C^p)+\frac{1}{2}(s I+(1-s)C^p)\right)^{1/p}A^{1/2}\right)
\\ & \le \Tr\left(A^{1/2}\left(\frac{1}{2}(t I+(1-t)C^p)^{1/p}+\frac{1}{2}(s I+(1-s)C^p)^{1/p}\right)A^{1/2}\right)
\\ & = \frac{1}{2}f(t)+\frac{1}{2}f(s),
\end{align*} where the inequality follows from the operator convexity of the function $x\mapsto x^{1/p}$ when $1/2\le p\le 1$. Therefore, the function $f$ is convex on $(0,1)$. 

Now, let us show that the function $g(t)$ is concave on $(0, 1)$. Note that if the following inequality  
\begin{align}\label{ineq11}
    g\left(\frac{t+s}{2}\right)& =\Tr\left(A^{1/4}\left(A^{1/2}\left(\frac{t+s}{2}I+\left(1-\frac{t+s}{2}\right)C^p\right)^{1/p}A^{1/2}\right)^{1/2}A^{1/4}\right)
\\ \nonumber & \ge \Tr\left(A^{1/2}\left(\frac{t+s}{2}I+\left(1-\frac{t+s}{2}\right)C^p\right)^{1/2p}A^{1/2}\right)
\end{align}
is true, then by the operator concavity of $x\mapsto x^{1/2p}$ for $1/2\le p\le 1$ one can see that 
\begin{align*}
    g\left(\frac{t+s}{2}\right) & \ge \frac{1}{2}\Tr\left(A^{1/2}\left(tI+\left(1-t\right)C^p\right)^{1/2p}A^{1/2}\right)\\ & \quad +\frac{1}{2}\Tr\left(A^{1/2}\left(sI+\left(1-s\right)C^p\right)^{1/2p}A^{1/2}\right)
    \\ & = \frac{1}{2}g(t)+\frac{1}{2}g(s).
\end{align*}
Suffices, now, to establish (\ref{ineq11}) to finish the proof. We show a stronger statement, that is, for $A, H\in \mathcal{D}_n$ and $\alpha \ge 1$,
\begin{equation}\label{inequa11}
\lambda(A H^{1/\alpha}A) \prec_{log}\lambda(A^{1/2}(A^{\alpha/2}H A^{\alpha/2})^{1/\alpha}A^{1/2}).
\end{equation}
Then, for $\alpha =2, H=\left(\frac{t+s}{2}I+\left(1-\frac{t+s}{2}\right)C^p\right)^{1/p}$ and $A$ replaced by $A^{1/2}$ we get (\ref{ineq11}).

First, we show that 
\begin{equation}\label{in11}
\lambda_1(A^{1/2}(A^{\alpha /2}H A^{\alpha /2})^{1/\alpha }A^{1/2})\ge \lambda_1(A H^{1/\alpha }A).
\end{equation}
Once this is established, a standard argument using $k$-antisymmetric tensor powers gives the desired inequality (\ref{inequa11}). Inequality (\ref{in11}) follows if
$$A^{1/2}(A^{\alpha /2}H A^{\alpha /2})^{1/\alpha }A^{1/2}\le I \quad  \implies \quad  A H^{1/\alpha }A\le I,$$
or, equivalently,
$$(A^{\alpha /2}H A^{\alpha /2})^{1/\alpha }\le A^{-1} \quad  \implies \quad H^{1/\alpha }\le A^{-2}.$$
The last implication follows from Furuta's inequality: For $X\ge Y\ge 0$, $q\ge 1$, and $r\ge 0$, 
$$(X^rY^qX^r)^{1/q}\le (X^{q+2r})^{1/q},$$
applied to $X=A^{-1}$, $Y=(A^{\alpha/2}H A^{\alpha /2})^{1/\alpha}$, $q=\alpha $ and $r=\alpha /2$. 
\end{proof}

A straightforward consequence from Theorem \ref{Thm4} and Proposition \ref{Prop4} is the decreasing monotonicity of $d_h(A,P_p(t;A,B))$.
\begin{corollary}
Let $A,B\in \mathcal{D}_n$,  $1/2\le p \le 1$, and  $0\le t \le 1$
Then, the function $t \mapsto d_h(A,P_p(t;A,B))$ is monotonically decreasing.
\end{corollary}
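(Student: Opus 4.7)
The plan is to combine Theorem \ref{Thm4} with the semigroup-type identity for $P_p$ that was already extracted in the proof of Proposition \ref{Prop4}. First I would fix $t \in [0,1]$ and set $\hat{B} := P_p(t; A, B)$. The key computation from the proof of Proposition \ref{Prop4} shows that for every $r \in [0,1]$,
$$P_p(r; A, \hat{B}) = P_p(s; A, B), \qquad s = r + (1-r)t.$$
The map $r \mapsto s = r + (1-r)t$ is an increasing bijection from $[0,1]$ onto $[t,1]$, so every $s \in [t, 1]$ is realized by a unique $r \in [0, 1]$. In other words, the curve $\{P_p(s; A, B) : s \in [t, 1]\}$ coincides with the curve $\{P_p(r; A, \hat{B}) : r \in [0, 1]\}$.

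Next I would apply the in-betweenness statement of Theorem \ref{Thm4} to the pair $(A, \hat{B})$, which yields
$$d_h\bigl(A, P_p(r; A, \hat{B})\bigr) \le d_h(A, \hat{B}) \qquad \text{for all } r \in [0,1].$$
Substituting the identity above together with $\hat{B} = P_p(t; A, B)$ on both sides converts this into
$$d_h(A, P_p(s; A, B)) \le d_h(A, P_p(t; A, B))$$
for every $0 \le t \le s \le 1$, which is precisely the monotonic decrease of $t \mapsto d_h(A, P_p(t; A, B))$.

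I do not expect any serious obstacle here: the genuine analytic work — operator convexity of $x \mapsto x^{1/p}$, operator concavity of $x \mapsto x^{1/2p}$, and the log-majorization \eqref{inequa11} obtained via Furuta's inequality — has all been absorbed into Theorem \ref{Thm4}, and the semigroup identity is a one-line algebraic manipulation already present in Proposition \ref{Prop4}. The one point to watch is the orientation of the parametrization: since $P_p(0; A, B) = B$ and $P_p(1; A, B) = A$, the in-betweenness inequality $d_h(A, P_p(r; A, B)) \le d_h(A, B)$ has to be translated into monotone \emph{decrease} (rather than increase) of the distance along $t$, which is exactly what the identity $P_p(r; A, \hat{B}) = P_p(s; A, B)$ with $s \ge t$ accomplishes.
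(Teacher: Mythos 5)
Your proof is correct and is exactly the argument the paper intends: the corollary is stated as ``a straightforward consequence of Theorem \ref{Thm4} and Proposition \ref{Prop4}'', and your combination of the in-betweenness inequality for the pair $(A,\hat{B})$ with the reparametrization identity $P_p(r;A,\hat{B})=P_p(s;A,B)$, $s=r+(1-r)t$, is precisely how Proposition \ref{Prop4} converts in-betweenness into monotone decrease. You have simply written out the details the paper leaves implicit, including the correct orientation check ($P_p(0;A,B)=B$, $P_p(1;A,B)=A$).
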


Counterexamples for the fact that the geometric and the harmonic means do not satisfy the in-betweenness property with respect to the Hellinger distance. Consider,
\begin{align*}
    A=\left(
\begin{array}{cc}
 113 & -36 \\
 -36 & 17 \\
\end{array}
\right) \quad \text{ and } \quad  B=\left(
\begin{array}{cc}
 12 & -12 \\
 -12 & 12 \\
\end{array}
\right).
\end{align*}
Then,
$$7.94782=d_h(A, A\sharp B) \not\le d_h(A,B)=7.8729.$$
Similarly, if we consider
$$ A=\left(
\begin{array}{cc}
 58 & -24 \\
 -24 & 10 \\
\end{array}
\right) \quad \text{ and } \quad  B=\left(
\begin{array}{cc}
 13 & -8 \\
 -8 & 5 \\
\end{array}
\right).$$
Then,
$$5.66315=d_h(A, A! B) \not\le d_h(A,B)=4.20652,$$
where $A!B=\mu_{-1}(1/2; A,B)$ is the harmonic mean of $A$ and $B$.

\subsection{Bures-Wasserstein Metric}
Recall that the fidelity between two quantum states $\rho$ and $\sigma$ is defined by
$$\sqrt{\mathcal{F}(\rho,\sigma)}=\Tr((\rho^{1/2}\sigma \rho^{1/2})^{1/2}).$$
And the Bures-Wasserstein metric between two states $\rho$ and $\sigma$ is defined by
$$d_b(\rho,\sigma)=\big(\Tr(\rho+\sigma)-2\sqrt{\mathcal{F}(\rho,\sigma)} \big)^{1/2}.$$ That makes the Bures-Wasserstein metric being important in quantum information theory.

It was proved in \cite[Proposition 9]{2016arXiv161103449S} that the Bures-Wasserstein and Hellinger metrics are equivalent in the space of density matrices. More precisely, they showed that for two quantum states $\rho$ and $\sigma$,
\begin{equation}\label{BuresHellRel}
   d_b(\rho,\sigma)\le d_h(\rho,\sigma)\le \sqrt{2}\,d_b(\rho,\sigma).
\end{equation}

In the following we show that  the Bures-Wasserstein and Hellinger metrics are equivalent in the cone of positive semidefinite matrices.
\begin{proposition}\label{BuresHellRelPos}
Let $A,B\in \mathcal{D}_n$. Then,
\begin{equation*}
   d_b(A,B)\le d_h(A,B)\le \sqrt{2}\,d_b(A,B).
\end{equation*}
\end{proposition}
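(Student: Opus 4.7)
The plan is to prove the two inequalities $d_b(A,B)\le d_h(A,B)$ and $d_h(A,B)\le \sqrt{2}\,d_b(A,B)$ separately. The first follows immediately from the classical bound $|\Tr(X)|\le \|X\|_1$, while for the second I would use a normalization argument to reduce to the density-matrix case (\ref{BuresHellRel}), finishing off with the arithmetic--geometric mean inequality.

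For the lower bound, I would take $X=A^{1/2}B^{1/2}$ and observe that $\Tr(X)$ is real, since $\Tr(X)=\Tr(X^*)$. Combining $|\Tr(X)|\le \|X\|_1$ with the standard identity $\|A^{1/2}B^{1/2}\|_1=\Tr((A^{1/2}BA^{1/2})^{1/2})$ yields $\Tr(A^{1/2}B^{1/2})\le \Tr((A^{1/2}BA^{1/2})^{1/2})$, and substituting this into the definitions of $d_h$ and $d_b$ gives $d_b(A,B)^2\le d_h(A,B)^2$ at once.

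For the upper bound, set $a=\Tr(A)$, $b=\Tr(B)$ and pass to the density matrices $\rho=A/a$ and $\sigma=B/b$. Since $\rho^{1/2}\sigma^{1/2}=A^{1/2}B^{1/2}/\sqrt{ab}$, the definitions of the two distances give
\[
d_h(\rho,\sigma)^2 = 2-\frac{2\,\Tr(A^{1/2}B^{1/2})}{\sqrt{ab}},\qquad d_b(\rho,\sigma)^2=2-\frac{2\,\|A^{1/2}B^{1/2}\|_1}{\sqrt{ab}}.
\]
Plugging these expressions into the density-matrix inequality (\ref{BuresHellRel}) and clearing denominators isolates the key intermediate estimate
\[
2\|A^{1/2}B^{1/2}\|_1 - \Tr(A^{1/2}B^{1/2}) \le \sqrt{ab}.
\]

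To finish I would double this inequality and invoke AM--GM in the form $2\sqrt{ab}\le a+b$, obtaining
\[
4\|A^{1/2}B^{1/2}\|_1-2\Tr(A^{1/2}B^{1/2}) \le 2\sqrt{ab} \le a+b,
\]
which after substituting the definitions is exactly $d_h(A,B)^2\le 2\,d_b(A,B)^2$. I do not anticipate a serious technical obstacle here; the only point that requires a small amount of care is confirming that the density-matrix inequality rescales cleanly to $2\|A^{1/2}B^{1/2}\|_1 - \Tr(A^{1/2}B^{1/2}) \le \sqrt{\Tr(A)\Tr(B)}$, after which the gap to $\tfrac{1}{2}\Tr(A+B)$ is closed by the scalar AM--GM inequality and the argument terminates without any new analytic input.
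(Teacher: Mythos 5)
Your proof is correct and follows essentially the same route as the paper: normalize to the density matrices $A/\Tr(A)$ and $B/\Tr(B)$, apply the known density-matrix inequality \eqref{BuresHellRel}, and close the gap between $2\sqrt{\Tr(A)\Tr(B)}$ and $\Tr(A+B)$ with the scalar AM--GM inequality. The only cosmetic difference is in the lower bound, where you obtain $\Tr(A^{1/2}B^{1/2})\le\Tr\bigl((A^{1/2}BA^{1/2})^{1/2}\bigr)$ from the elementary estimate $|\Tr X|\le\|X\|_1$ applied to $X=A^{1/2}B^{1/2}$, whereas the paper cites the Araki--Lieb--Thirring inequality for the same trace inequality.
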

\begin{proof}
The first inequality follows from the fact that
$$\Tr(A^{1/2}BA^{1/2})^{1/2}\ge \Tr(A^{1/2}B^{1/2}),$$
which is a consequence of the famous Araki-Lieb-Thirring inequality \cite{ArakiAude}. 

For the second inequality, let $\rho = A/\Tr(A)$ and $\sigma = B/\Tr(B)$. Since $\rho,\sigma \in \mathcal{D}_n^1$, from inequality \eqref{BuresHellRel} it implies that
\begin{align*}
   & &    d_h^2(\rho, \sigma) = 2-2 \, \Tr(\rho^{1/2}\sigma^{1/2}) \le 4-4\, \Tr((\rho^{1/2}\sigma \rho^{1/2})^{1/2})=2d_b^2(\rho,\sigma),
\end{align*}
or, 
$$
2\Tr((\rho^{1/2}\sigma \rho^{1/2})^{1/2}) \le 1+\Tr(\rho^{1/2}\sigma^{1/2}).
  $$
By substituting in $\rho = A/\Tr(A)$ and $\sigma = B/\Tr(B)$, from the last inequality and the AGM inequality we obtain that
\begin{align*}
2\, \Tr((A^{1/2}B A^{1/2})^{1/2}) & \le \Tr(A)^{1/2}\Tr(B)^{1/2} + \Tr(A^{1/2}B^{1/2})  \\ 
& \le \frac{1}{2}\Tr(A+B) + \Tr(A^{1/2}B^{1/2}).
\end{align*}
Consequently, 
\begin{align*}
   d_h^2(A,B) & = \Tr(A+B)-2\, \Tr(A^{1/2}B^{1/2}) \\
   & \le 2(\Tr(A+B)-2\, \Tr(A^{1/2}B A^{1/2})^{1/2}) \\
   & = 2d_b^2(A,B).
    \end{align*}
Thus, $$d_h(A,B) \le \sqrt{2} \, d_b(A,B).$$  
\end{proof}


Now we are ready to show that for the matrix power mean $\mu_p(t;A,B))$ the function $d_b(A,\mu_p(t;A,B))$ is monotonically increasing for $t\in [1/2, 1]$.
\begin{theorem}\label{InBetBur}
Let $A,B\in \mathcal{D}_n$, $1/2 \le p \le 1$ and $1/2 \le t \le 1$. Then,
$$d_b(A,\mu_p(t;A,B))\le d_b(A,B).$$
\end{theorem}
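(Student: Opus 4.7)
The plan is to reduce the Bures-Wasserstein in-betweenness to a trace inequality and attack it in parallel with the Hellinger proof. Squaring both sides and cancelling $\Tr A$, the target is equivalent to
\[
\Tr\mu_p(t;A,B)-\Tr B \;\le\; 2\Tr\bigl((A^{1/2}\mu_p(t;A,B)A^{1/2})^{1/2}\bigr)-2\Tr\bigl((A^{1/2}BA^{1/2})^{1/2}\bigr).
\]
Setting $M=\mu_p(t;A,B)$ and applying the operator convexity of $x\mapsto x^{1/p}$ (valid since $1/p\in[1,2]$) to $M=(tA^p+(1-t)B^p)^{1/p}$, I would obtain $M\le tA+(1-t)B$, which bounds the left-hand side by $t(\Tr A-\Tr B)$. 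For the right-hand side, I would use the operator concavity of the square root applied to $X=A^2,\ Y=A^{1/2}BA^{1/2}$:
\[
(tA^2+(1-t)A^{1/2}BA^{1/2})^{1/2}\;\ge\; tA+(1-t)(A^{1/2}BA^{1/2})^{1/2},
\]
together with the trace AGM inequality $2\Tr((A^{1/2}BA^{1/2})^{1/2})\le \Tr A+\Tr B$. In the special case $p=1$ one has $A^{1/2}MA^{1/2}=tA^2+(1-t)A^{1/2}BA^{1/2}$ exactly, and these two estimates close the argument with no restriction on $t\in[0,1]$.

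The main obstacle for $p<1$ is that operator convexity bounds $A^{1/2}MA^{1/2}$ only from \emph{above}, producing only an upper bound on the fidelity-type trace $\Tr((A^{1/2}MA^{1/2})^{1/2})$, whereas the target inequality demands a \emph{lower} bound. To bypass this, I would invoke the equivalence from \cite{Audenaert2}: for the power means $\mu_p$, the single in-betweenness inequality $d(A,\mu_p(1/2;A,B))\le d(A,B)$ is equivalent to the full distance monotonicity of $t\mapsto d(A,\mu_p(t;A,B))$ on $[0,1]$, for any metric $d$ on $\mathcal{D}_n$. Since $d_b(A,\mu_p(0;A,B))=d_b(A,B)$ and $d_b(A,\mu_p(1;A,B))=0$, distance monotonicity forces this function to be non-increasing on $[0,1]$, so the entire theorem reduces to verifying the single inequality at the midpoint $t=1/2$, after which the restriction $t\in[1/2,1]$ becomes automatic.

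At that midpoint, $M=((A^p+B^p)/2)^{1/p}\le (A+B)/2$ and $\Tr M\le (\Tr A+\Tr B)/2$, so the crux is to produce a matching lower bound on $\Tr((A^{1/2}MA^{1/2})^{1/2})$. I would attempt this by sharpening the concavity-of-square-root estimate, possibly supplemented by Araki--Lieb--Thirring inequalities and AGM, in analogy with the $p=1$ computation sketched above. Should this direct attempt fail to yield a sufficiently sharp lower bound, a fallback is to bypass Audenaert's equivalence altogether and show monotonicity of $\phi(t)=d_b^2(A,\mu_p(t;A,B))$ on the subinterval $[1/2,1]$ by a derivative argument, exploiting the sign condition $t\ge 1/2$ to control $\phi'(t)$ on precisely this range.
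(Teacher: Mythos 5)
Your proposal correctly identifies the central obstacle for $p<1$ --- operator convexity of $x\mapsto x^{1/p}$ only bounds $A^{1/2}MA^{1/2}$ from \emph{above}, while the Bures term $\Tr\bigl((A^{1/2}MA^{1/2})^{1/2}\bigr)$ needs a lower bound --- but it does not overcome it. The step on which everything hinges (``produce a matching lower bound on $\Tr((A^{1/2}MA^{1/2})^{1/2})$ \ldots I would attempt this by sharpening the concavity estimate'') is left as an intention, and the fallback (a derivative argument on $[1/2,1]$) is likewise not carried out. Moreover, the proposed reduction via Audenaert's lemma is misstated: the equivalence is between distance monotonicity and in-betweenness holding for \emph{all} $t\in[0,1]$ (quantified over all pairs), not for the single value $t=1/2$. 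The midpoint inequality alone, via the composition identity $\mu_p(1/2;A,\mu_p(t;A,B))=\mu_p(\tfrac{1+t}{2};A,B)$, only yields $d_b(A,\mu_p(s;A,B))\le d_b(A,\mu_p(2s-1;A,B))$, which after iteration reaches $t=0$ only for $s$ of the form $1-2^{-k}$; it does not give the stated bound for general $s\in[1/2,1]$. A further warning sign: if your reduction were valid it would prove the claim for all $t\in[0,1]$, whereas the hypothesis $t\ge 1/2$ is essential to the argument the paper actually has.

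The paper sidesteps the fidelity lower bound entirely. It uses the variational formula $d_b(A,B)=\min_{U}\|A^{1/2}-B^{1/2}U\|_2$ with $U=I$ to get $d_b(A,M)^2\le\Tr\bigl(A+M-2A^{1/2}M^{1/2}\bigr)$, then applies the convexity of $x^{1/p}$ and the concavity of $x^{1/2p}$ (exactly as in the Hellinger theorem) to obtain the quantitative estimate $d_b(A,\mu_p(t;A,B))\le\sqrt{1-t}\,d_h(A,B)$. For $t\ge 1/2$ this gives $d_b(A,\mu_p(t;A,B))\le\tfrac{1}{\sqrt 2}d_h(A,B)$, and the metric equivalence $d_h\le\sqrt 2\,d_b$ (Proposition~\ref{BuresHellRelPos}) converts this into $d_b(A,\mu_p(t;A,B))\le d_b(A,B)$; the factor $\sqrt{1-t}\le 1/\sqrt 2$ exactly absorbs the $\sqrt 2$ loss, which is where the restriction $t\in[1/2,1]$ comes from. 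If you want to salvage your route, you would need either a genuine lower bound on $\Tr\bigl((A^{1/2}\mu_p(t;A,B)A^{1/2})^{1/2}\bigr)$ in terms of $\Tr\bigl((A^{1/2}BA^{1/2})^{1/2}\bigr)$ for $p<1$, or to adopt the comparison-of-metrics device above.
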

\begin{proof}Firstly, we show that for any positive positive semidefinite matrices $A$ and $B$, for $1/2 \le p \le 1$and  $0\le t \le 1$,
\begin{equation}\label{BuresHellRelPos1}
d_b(A,\mu_p(t;A,B))\le d_h(A,\mu_p(t;A,B))\le \sqrt{1-t} \, d_h(A,B).
\end{equation}
 Indeed, it is well-known \cite{BJL} that $$d_b(A,B)=\min_{U\in U(n)}\|A^{1/2}-B^{1/2}U\|_2,$$ where $U(n)$ is the group of unitary matrices of order $n$. In particular,
\begin{align*}d_b(A,\mu_p(t;A,B))^2 &\le \|A^{1/2}-\mu_p(t;A,B)^{1/2}\|^2_2 \\ & =\Tr\big(A+\mu_p(t;A,B)-2A^{1/2}\mu_p(t;A,B)^{1/2}\big).
\end{align*}
By the operator convexity of $x\mapsto x^{1/p}$, the operator concavity of $x\mapsto x^{1/2p}$, and the positivity of $A$, we obtain
\begin{align*}d_b(A,\mu_p(t;A,B))^2 &\le \Tr\big((1+t)A+(1-t)B-2(tA+(1-t)A^{1/2}B^{1/2})\big)\\
&=(1-t) \Tr\big(A-2A^{1/2}B^{1/2}+B)
\\ & = (1-t)\|A^{1/2}-B^{1/2}\|_2^2=(1-t)d_h(A,B)^2.\end{align*} 
From here, applying the square root function to both side we obtain (\ref{BuresHellRelPos1}).
Therefore, for $t \in [1/2, 1]$, we have 
\begin{align*}
   d_b(A, \mu_p(t;A,B)) & \le \sqrt{1-t} \, d_h(A,B)\le \frac{1}{\sqrt{2}} \, d_h(A,B) 
\end{align*}
Now, Proposition \eqref{BuresHellRelPos} implies that for $\frac{1}{2}\le t\le 1$,
$$d_b(A,\mu_p(t;A,B)) \le d_b(A,B).$$ 
By replacing $t$ with $\frac{1+t}{2}$, the previous inequality and  \cite[Lemma 1]{Audenaert2} imply that the real valued function $d_b(A,\mu_p(t;A,B))$ is monotonically increasing on $\frac{1}{2}\le t \le 1$. Thus, we prove the theorem. 
\end{proof}

In general, for $0\le p \le 1/2$ the ``in-betweenness" is not satisfied for $P_p(t;A,B).$ Consider, for example 
$$ A=\left(
\begin{array}{cc}
 5 & 14 \\
 14 & 41 \\
\end{array}
\right)\quad \text{ and } \quad  B=\left(
\begin{array}{cc}
 1 & -3 \\
 -3 & 18 \\
\end{array}
\right).$$
Then,
$$3.70465=d_b(A, P_{1/10}(1/8;A,B)) \not\le d_b(A,B)=3.60022.$$
Moreover, for
$$ A=\left(
\begin{array}{cc}
 167.621 & 47.0079 \\
 47.0079 & 14.0587 \\
\end{array}
\right) \quad \text{ and } \quad  B=\left(
\begin{array}{cc}
 37.903 & 23.3273 \\
 23.3273 & 14.4432 \\
\end{array}
\right),$$
we have
$$7.26351=d_h(A, P_{1/3}(1/4;A,B)) \not\le d_h(A,B)=7.22887.$$

From numerical simulations, it seems like  the ``in-betweenness" to be satisfied for $\mu_p(t;A,B)$ and  $p>0$ both for the Hellinger and Bures-Wasserstein metrics.




To finish this section in the following we show a similar result involving the quantum fidelity for $p\ge 1$ and both means $\mu_p(t;A,B)$ and $P_p(t;A,B)$.

\begin{proposition}
Let $A,B\in \mathcal{D}_n^1$, $p\ge 1$ and $0\le t \le 1$. Then,
$$ \mathcal{F}(A,\mu_p(t;A,B)) \ge \mathcal{F}(A,B)$$
and 
$$ \mathcal{F}(A,P_p(t;A,B)) \ge \mathcal{F}(A,B).$$
\end{proposition}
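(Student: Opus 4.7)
The plan is to reduce both inequalities to a single estimate by first showing that $\mu_p(t;A,B)$ and $P_p(t;A,B)$ dominate the arithmetic interpolation $tA+(1-t)B$ in the operator order, and then using operator monotonicity and operator concavity of $x\mapsto x^{1/2}$ to pass to the fidelity.

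\smallskip

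\textbf{Step 1 (common lower bound).} For $p\ge 1$, the exponent $1/p$ lies in $(0,1]$, so $x\mapsto x^{1/p}$ is operator concave on $[0,\infty)$. Applied to the convex combination $tA^p+(1-t)B^p$ this gives
$$\mu_p(t;A,B)=(tA^p+(1-t)B^p)^{1/p}\ge tA+(1-t)B.$$
Applied with $X=I$ and $Y=C^p$, where $C=A^{-1/2}BA^{-1/2}$, the same concavity yields $(tI+(1-t)C^p)^{1/p}\ge tI+(1-t)C$, and conjugating by $A^{1/2}$ on both sides produces
$$P_p(t;A,B)\ge tA+(1-t)B.$$

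\smallskip

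\textbf{Step 2 (passage to fidelity).} Let $M$ denote either $\mu_p(t;A,B)$ or $P_p(t;A,B)$. Congruence by $A^{1/2}$ preserves the order, so
$$A^{1/2}MA^{1/2}\ge tA^2+(1-t)A^{1/2}BA^{1/2}.$$
Since $x\mapsto x^{1/2}$ is operator monotone and operator concave,
$$(A^{1/2}MA^{1/2})^{1/2}\ge\bigl(tA^2+(1-t)A^{1/2}BA^{1/2}\bigr)^{1/2}\ge tA+(1-t)(A^{1/2}BA^{1/2})^{1/2}.$$
Taking traces and using $\Tr(A)=1$ (since $A\in\mathcal{D}_n^1$),
$$\sqrt{\mathcal{F}(A,M)}=\Tr\bigl((A^{1/2}MA^{1/2})^{1/2}\bigr)\ge t+(1-t)\sqrt{\mathcal{F}(A,B)}.$$

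\smallskip

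\textbf{Step 3 (conclusion).} Because $A,B\in\mathcal{D}_n^1$, the fidelity satisfies $\mathcal{F}(A,B)\le 1$, hence $\sqrt{\mathcal{F}(A,B)}\le 1$. Combining with Step 2,
$$\sqrt{\mathcal{F}(A,M)}\ge t+(1-t)\sqrt{\mathcal{F}(A,B)}\ge t\sqrt{\mathcal{F}(A,B)}+(1-t)\sqrt{\mathcal{F}(A,B)}=\sqrt{\mathcal{F}(A,B)}.$$
Squaring yields the two claimed inequalities simultaneously.

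\smallskip

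There is no real obstacle: the argument is essentially three applications of operator concavity/monotonicity together with the two facts $\Tr(A)=1$ and $\mathcal{F}(A,B)\le 1$ specific to the density-matrix setting. The only point deserving care is verifying that the operator concavity of $x\mapsto x^{1/p}$ for $p\ge 1$ gives the lower bound in the $P_p$ case; once that is in place the rest is a clean chain of operator inequalities that never leaves the positive cone.
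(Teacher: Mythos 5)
Your proof is correct and follows essentially the same route as the paper's: both arguments reduce to the operator inequality $\mu_p(t;A,B),\,P_p(t;A,B)\ge tA+(1-t)B$ via operator concavity of $x\mapsto x^{1/p}$ for $p\ge 1$, and then conclude using operator concavity and monotonicity of the square root together with $\Tr A=1$ and $\mathcal{F}(A,B)\le 1$. Your Step 2 simply makes explicit the monotonicity of fidelity in its second argument under the operator order, which the paper states without spelling out.
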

\begin{proof}
For $p=1$, this is easily shown to be true. Indeed,
\begin{multline*}
        \sqrt{\mathcal{F}(A,\mu_1(t;A,B))} =\Tr(t A^2+(1-t)A^{1/2}BA^{1/2})^{1/2}\\ \ge t+(1-t)\sqrt{\mathcal{F}(A,B)}\ge \sqrt{\mathcal{F}(A,B)},
\end{multline*}
where the first inequality follows from the operator concavity of the square root and the second is a consequence of Uhlman's theorem, that states that $\mathcal{F}(A,B)\in [0,1]$ with $\mathcal{F}(A,B)=1$ if and only if $A=B$.

Now, we note that if $0\le q \le p$ the function $x\mapsto x^{q/p}$ is operator concave, then
$$(tA^p+(1-t)B^p)^{q/p}\ge tA^q+(1-t)B^q.$$
In particular, when $q=1$, this implies
$$\sqrt{\mathcal{F}(A,\mu_p(t;A,B))}\ge \sqrt{\mathcal{F}(A,\mu_1(t;A,B))},$$
from which the result for $\mu_p(t;A,B)$ follows. The proof for $P_p(t;A,B)$ is similar as $P_1(t;A,B)=\mu_1(t;A,B)$ and 
$$(tI+(1-t)(A^{-1/2}BA^{-1/2})^p)^{q/p}\ge tI+(1-t)(A^{-1/2}BA^{-1/2})^q,$$
which implies 
$$\sqrt{\mathcal{F}(A,P_p(t;A,B))}\ge \sqrt{\mathcal{F}(A,P_1(t;A,B))}.\quad \qedhere$$ 
\end{proof}



\section{Monotonicity and in-sphere property in log-determinant metric}

In this section we consider monotonicity and  in-sphere property for matrix means in Log-Determinant metric. We discuss the connection between these two properties for the Heron mean. 

Firstly, note that for $a > 0$, the function $f(t) = a^{t/2}\nabla a^{-t/2}$ is increasingly monotone on $[0, 1]$, namely, for any $0\le t<s\le 1$,
\begin{equation}\label{lemma1}
    a^{t/2}\nabla a^{-t/2}\le a^{s/2}\nabla a^{-s/2}.
\end{equation}

\begin{theorem}\label{InBetLogDet}
For any two positive definite matrices $A$ and $B$, the function $t\mapsto d_l(A,A\sharp_t B)$ is increasingly monotone on $[0,1]$.
\end{theorem}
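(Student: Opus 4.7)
The plan is to reduce the operator-level statement to the scalar monotonicity recorded in (\ref{lemma1}) via a simultaneous diagonalization. Set $C := A^{-1/2}BA^{-1/2}$, so that $A\sharp_t B = A^{1/2}C^t A^{1/2}$, and let $\lambda_1,\dots,\lambda_n > 0$ denote the eigenvalues of $C$. Two one-line determinant calculations then yield
$$\det(A\sharp_t B) = \det(A)\prod_{i=1}^n \lambda_i^t, \qquad \det\!\left(\tfrac{A + A\sharp_t B}{2}\right) = 2^{-n}\det(A)\prod_{i=1}^n(1+\lambda_i^t),$$
where the second identity uses $A + A\sharp_t B = A^{1/2}(I + C^t)A^{1/2}$.

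Substituting these expressions into the definition of $d_l$, the $\det(A)$ contributions and the overall factor of $2^{-n}$ cancel cleanly, leaving the diagonalized form
$$d_l(A, A\sharp_t B)^2 \;=\; \sum_{i=1}^n \log\frac{1 + \lambda_i^t}{2\lambda_i^{t/2}} \;=\; \sum_{i=1}^n \log\!\bigl(\lambda_i^{t/2}\nabla \lambda_i^{-t/2}\bigr).$$
This identity is the heart of the argument: the original operator-level question has become a sum of $n$ independent scalar questions, each in exactly the form appearing in (\ref{lemma1}).

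From here the theorem is immediate. Applying (\ref{lemma1}) with $a = \lambda_i$ shows that each $\lambda_i^{t/2}\nabla \lambda_i^{-t/2}$ is monotone increasing in $t$ on $[0,1]$, hence so is its logarithm. Summing over $i$ preserves monotonicity, and taking a square root preserves it once more, giving the claim. I do not anticipate a genuine obstacle: the only analytic input is the scalar fact (\ref{lemma1}), which the paper has already established, and everything else is determinant bookkeeping. The one place to be careful is making sure the $\det(A)$ factors and the $2^{-n}$ truly cancel, which they do precisely because the log-determinant distance is scale-equivariant in the right way.
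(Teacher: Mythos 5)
Your proof is correct and is essentially the paper's own argument: both reduce the claim to the scalar monotonicity \eqref{lemma1}, the paper via the congruence invariance $d_l(XAX,XBX)=d_l(A,B)$ and you via the explicit eigenvalue/determinant computation, which is the same reduction carried out coordinatewise. The only cosmetic difference is that you work with $d_l^2$ while the paper's proof manipulates $\log\det\bigl(C^{t/2}\nabla C^{-t/2}\bigr)$ directly; since each summand $\log\bigl(\lambda_i^{t/2}\nabla\lambda_i^{-t/2}\bigr)$ is nonnegative and the square root is monotone, this changes nothing.
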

\begin{proof}
We want to show that $d_l(A,A\sharp_t B)\le d_l(A, A\sharp_s B)$ whenever $0\le t<s\le 1$. Firstly, note that for any positive matrix $X$ we have
 $$d_l(XAX,XBX)=d_l(A,B).$$
Consequently, for any Kubo-Ando mean $\sigma$ with the representing function $f_\sigma$ we have
 $$d_l(A,A\sigma B)= d_l(I,f_\sigma(C)),$$
where $C= A^{-1/2}BA^{-1/2}$. Therefore,  
$$d_l(A,A\sharp_t B)\le d_l(A, A\sharp_s B) \quad \hbox{if and only if} \quad d_l(I,C^t)\le d_l(I,C^s),$$
or 
$$\log \det C^{t/2}\nabla C^{-t/2}\le \log \det C^{s/2}\nabla C^{-s/2}.$$
The last inequality follows from the monotonicity of the logarithm and the determinant, and (\ref{lemma1}). 
\end{proof}

 Note that for positive definite matrices $A$ and $B$, $A\sharp_t B$ and $A\nabla_t B$ are the geodesic curve and the linear interpolation joining $A$ and $B$, respectively. If we consider those curves for $A\sharp B$ and $A\nabla B$, we have the following:  
$$A\diamondsuit_{t} B = (A\sharp B)\sharp_t(A\nabla B),  \quad t \in [0, 1],$$
and 
$$
H_t(A, B) = t A\sharp B + (1-t) A\nabla B, \quad t \in [0, 1].
$$ 
The geodesic curve $A\diamondsuit_t B$ between $A\sharp B$ and $A\nabla B$ was introduced and studied by the authors in \cite{DDF2}.   In the following we show that $d_l(A, A\diamondsuit_t B)$ is monotonic in $t\in [0, 1]$.

\begin{theorem}\label{thmoc}
For any two positive definite matrices $A$ and $B$ and $0\le t\le 1$,
$$d_l(A,A\diamondsuit_t B)\le d_l(A,B).$$
\end{theorem}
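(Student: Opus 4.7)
My plan is to mirror the strategy used for Theorem~\ref{InBetLogDet}: reduce the matrix inequality, via congruence invariance and simultaneous diagonalization, to a one-parameter scalar inequality, and then verify that scalar inequality by elementary means.

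Step one is the reduction. Since the Log-Determinant metric satisfies $d_l(XAX, XBX) = d_l(A,B)$ for any positive definite $X$ (as used in the proof of Theorem~\ref{InBetLogDet}), conjugation by $A^{-1/2}$ together with the transport identity for the Kubo--Ando geometric mean yields
$$d_l(A, A\diamondsuit_t B) = d_l\bigl(I,\, C^{1/2}\sharp_t \tfrac{I+C}{2}\bigr), \qquad d_l(A,B) = d_l(I,C),$$
where $C = A^{-1/2}BA^{-1/2}$. Both $C^{1/2}$ and $(I+C)/2$ are functions of $C$, so they are simultaneously diagonalizable with $C$. Writing $\lambda_1,\dots,\lambda_n$ for the eigenvalues of $C$, the matrix $C^{1/2}\sharp_t (I+C)/2$ has eigenvalues $\mu_i = \lambda_i^{(1-t)/2}\bigl((1+\lambda_i)/2\bigr)^t$. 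Since $d_l(I,\cdot)$ decomposes as a sum over eigenvalues, it suffices to prove the scalar inequality $d_l(1,\mu) \le d_l(1,\lambda)$ for every $\lambda>0$ and $t\in [0,1]$ with $\mu = \lambda^{(1-t)/2}((1+\lambda)/2)^t$.

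Step two is the scalar analysis. The key observation is the compact form
$$d_l(1,x) = \log\frac{1+x}{2\sqrt{x}} = \log\frac{\sqrt{x} + 1/\sqrt{x}}{2} = \log\cosh\!\bigl(\tfrac{1}{2}\log x\bigr),$$
which is an even, strictly increasing function of $|\log x|$. Thus the desired scalar inequality is equivalent to $|\log \mu| \le |\log \lambda|$. Now
$$\log \mu = (1-t)\log\sqrt{\lambda} + t\log\frac{1+\lambda}{2},$$
and both $\sqrt{\lambda}$ (the geometric mean of $1$ and $\lambda$) and $(1+\lambda)/2$ (the arithmetic mean of $1$ and $\lambda$) lie in the closed interval between $1$ and $\lambda$. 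Consequently their logarithms lie between $0$ and $\log \lambda$, and $\log \mu$, being a convex combination of these, lies in the same interval. This gives $|\log \mu| \le |\log \lambda|$, completing the proof.

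The principal obstacle, and decisive insight, is recognizing the identity $d_l(1,x) = \log\cosh\bigl(\tfrac{1}{2}\log x\bigr)$: once this symmetry is in hand, the comparison collapses to an elementary statement about convex combinations of the geometric and arithmetic means of $1$ and $\lambda$, both of which are trapped between the endpoints. Everything else is bookkeeping.
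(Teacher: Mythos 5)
Your proof is correct, but it takes a genuinely different route from the paper's. The paper first establishes the additive inequality $d_l(A,A\sharp B)+d_l(A\sharp B,A\nabla B)\le d_l(A,B)$ (inequality (\ref{moc})) and then chains it with the triangle inequality and the geodesic monotonicity of Theorem \ref{InBetLogDet} applied to the segment from $A\sharp B$ to $A\nabla B$. You instead reduce everything, via congruence invariance and the fact that $C^{1/2}$ and $(I+C)/2$ commute, to the scalar inequality $\log\cosh(\tfrac12\log\mu)\le\log\cosh(\tfrac12\log\lambda)$ with $\mu=\lambda^{(1-t)/2}((1+\lambda)/2)^{t}$, which you settle by observing that $\log\mu$ is a convex combination of $\log\sqrt{\lambda}$ and $\log\tfrac{1+\lambda}{2}$, both trapped between $0$ and $\log\lambda$. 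The individual steps check out: the transformer identity does give $A^{-1/2}(A\diamondsuit_t B)A^{-1/2}=C^{1/2}\sharp_t\tfrac{I+C}{2}$; the geometric mean of commuting positive matrices is $X^{1-t}Y^{t}$; and $d_l(I,\cdot)$ splits as a sum of $\log\tfrac{1+\mu_i}{2\sqrt{\mu_i}}$ over eigenvalues, using the normalization $d_l(A,B)=\log\det\tfrac{A+B}{2}-\tfrac12\log\det(AB)$ that the paper's proofs actually employ (its displayed definition with the factor $2$ is a misprint). Your approach buys two things: it is self-contained, needing neither Theorem \ref{InBetLogDet} nor (\ref{moc}); and it avoids the triangle inequality for $d_l$, which the paper's first step $d_l(A,A\diamondsuit_t B)\le d_l(A,A\sharp B)+d_l(A\sharp B,A\diamondsuit_t B)$ takes for granted even though for the log-determinant divergence only the square root $\sqrt{d_l}$ is known to be a metric. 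What the paper's argument buys in exchange is a structural picture --- the near-additivity of $d_l$ along the chain $A$, $A\sharp B$, $A\nabla B$ --- that it reuses in the subsequent in-sphere discussion, whereas your computation, though sharper here (it bounds each eigenvalue's contribution separately), does not produce that intermediate inequality.
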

\begin{proof}
Firstly, we show that for any two positive definite matrices $A$ and $B$ and $0\le t\le 1$,
\begin{equation}\label{moc}
d_l(A,A\sharp B)+d_l(A\sharp B, A\nabla B)\le d_l(A,B).
\end{equation}
For that,  it suffices to show that
$$d_l(I,C^{1/2})+d_l(I, C^{1/2}\nabla C^{-1/2})\le d_l(I,C),$$
where $C= A^{-1/2}BA^{-1/2}$. The last inequality is equivalent to the following:
\begin{multline*}
    \log \det C^{1/4}\nabla C^{-1/4}+\log \det \frac{(C^{1/2}\nabla C^{-1/2})^{1/2}+(C^{1/2}\nabla C^{-1/2})^{-1/2}}{2} \\ \le \log \det C^{1/2}\nabla C^{-1/2},
\end{multline*}
or,
\begin{multline*}
     \det \left( C^{1/4}\nabla C^{-1/4} \cdot \frac{(C^{1/2}\nabla C^{-1/2})^{1/2}+(C^{1/2}\nabla C^{-1/2})^{-1/2}}{2}\right)  \\ \le \det  C^{1/2}\nabla C^{-1/2}.
\end{multline*}
By the operator concavity of the square root, we have
$$C^{1/4}\nabla C^{-1/4} \le \left(C^{1/2}\nabla C^{-1/2}\right)^{1/2}.$$
Therefore, the desired inequality (\ref{moc}) follows from
$$ \frac{X^{1/2}+ X^{-1/2}}{2} \le X^{1/2} \quad \hbox{with} \quad X = C^{1/2}\nabla  C^{-1/2}$$
which is obvious because $X \ge 1$.

Finally, on account of (\ref{moc}) and Theorem \ref{InBetLogDet} we have
\begin{align*}
d_l(A,A\diamondsuit_t B) & \le d_l(A,A\sharp B)+d_l(A\sharp B, A\diamondsuit_t B) \\
& \le d_l(A,A\sharp B)+d_l(A\sharp B, A\nabla B) \\
    & \le d_l(A,B).
\end{align*}
\end{proof}

\begin{remark}
Notice that, $\diamondsuit_t$ is a symmetric Kubo-Ando mean. Therefore, \cite[Theorem 8]{DDF} implies that $\diamondsuit_t$ does not satisfy the distance monotonicity or in-beteweenness with respect to any metric induced from a unitarily invariant norm. Moreover, while $\diamondsuit_t$ satisfies the in-betweenness with respect to the log-determinant metric, it does not satisfy the distance monotonicity.
\end{remark}

It is worth noting that in \cite{HK} we considered a ``naive" in-sphere property with respect to the arithmetic mean $A\nabla B$, i.e., sphere centered at $A\nabla B$. It was showed in \cite[Theorem 2.2]{HK} that for any symmetric mean  $\sigma$ and arbitrary unitarily invariant norm $||| \cdot |||$  on $\mathbb{M}_n$, if 
\begin{equation*}\label{5}
||| A\nabla B-  A\sigma B |||   \le \frac{1}{2} ||| A-B ||| 
\end{equation*}
holds whenever $A, B\in \mathcal{D}_n$, then $\sigma$ is the arithmetic mean.

In another picture, from Theorem \ref{InBetLogDet} one can see that the function 
$d_l(A\sharp B, A\diamondsuit_t B)$ is monotonic on $[0, 1]$. Therefore, 
$$ 
d_l(A\sharp B, A\diamondsuit_t B) \le d_l(A\sharp B, A\diamondsuit_1 B)=d_l(A\sharp B, A\nabla B).
$$
From the proof of Theorem \ref{thmoc} it is clear that 
$$ 
d_l(A\sharp B, A\nabla B) \le \frac{1}{2}d_l(A, B).
$$
Therefore, 
$$d_l(A\sharp B, A\diamondsuit_t B) \le \frac{1}{2}d_l(A, B).$$
The last inequality means that the curve  $A\diamondsuit_t B\ (t \in [0, 1])$ lies inside the sphere centered at $A\sharp B$ with the radius $d_l(A, B)/2.$ That means, when we change center of spheres, we may have different pictures of the in-sphere property. 

To finish this paper, we show that the Heron mean satisfies the in-sphere property with respect to the geometric mean. And hence, the curves $A \sharp_t B,$ $A\diamondsuit_t B$, $H_t(A, B)$ lie inside  the sphere centered at $A\sharp B$ with the radius $\frac{1}{2}d_l(A, B).$ 

\begin{theorem}
For positive definite matrices $A$ and $B$ and for any $t\in [0, 1]$, 
\begin{equation}\label{heron}
d_l(A\sharp B, H_t(A, B)) \le \frac{1}{2}d_l(A, B).
\end{equation}
\end{theorem}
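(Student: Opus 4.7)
The plan is to mimic the congruence-invariance reduction used in Theorems \ref{InBetLogDet} and \ref{thmoc}, collapsing the problem to a one-variable scalar inequality. Setting $C := A^{-1/2}BA^{-1/2}$, I first rewrite $A\sharp B = A^{1/2}C^{1/2}A^{1/2}$ and $H_t(A,B) = A^{1/2}\bigl(tC^{1/2}+(1-t)\tfrac{I+C}{2}\bigr)A^{1/2}$. Using the congruence invariance $d_l(XPX,XQX) = d_l(P,Q)$ successively with $X = A^{-1/2}$ and then $X = C^{-1/4}$, I obtain
$$d_l\bigl(A\sharp B,\,H_t(A,B)\bigr) \;=\; d_l\bigl(I,\ tI + (1-t)Y\bigr),$$
where $Y := C^{1/2}\nabla C^{-1/2}$. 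Note that $Y\ge I$ by the operator AM-GM inequality.

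Next, I use the identity $d_l(I,C) = \log\det Y$, which is implicit in the proof of Theorem \ref{thmoc}, so that $\tfrac{1}{2} d_l(A,B) = \tfrac{1}{2} \log\det Y$. With $Z := tI + (1-t)Y$, the target inequality becomes $d_l(I,Z) \leq \tfrac{1}{2}\log\det Y$. Unpacking the log-determinant distance and exponentiating, this is equivalent to the determinantal inequality
$$\det\!\left(\frac{I+Z}{2}\right)^{\!2} \;\leq\; \det(YZ).$$
Because $Y$ and $Z$ are both polynomials in $C$, they commute and are simultaneously diagonalizable, so this reduces to the scalar inequality: for every eigenvalue $y\ge 1$ of $Y$, setting $z = t + (1-t)y$, one needs
$$\left(\frac{1+z}{2}\right)^{\!2} \;\leq\; yz.$$

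Expanding and rearranging using $z = t + (1-t)y$, this scalar inequality is equivalent to
$$(y-1)\bigl[(1-t)(3+t)\,y + (1+t)^{2}\bigr] \;\geq\; 0,$$
which is manifestly nonnegative for $y\ge 1$ and $t\in[0,1]$, both factors being nonnegative. I expect the main obstacle to be not conceptual but algebraic: arriving at the clean factorization above. Once it is in hand, the proof is essentially immediate, and the only other item to verify along the way is the direct computation $d_l(I,C) = \log\det(C^{1/2}\nabla C^{-1/2})$ that underwrites the reduction.
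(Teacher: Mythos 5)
Your proof is correct and follows essentially the same route as the paper: congruence invariance reduces the claim to $d_l(I,\,tI+(1-t)Y)\le \tfrac{1}{2}\log\det Y$ with $Y=C^{1/2}\nabla C^{-1/2}\ge I$, and eigenvalue-wise this is exactly the scalar inequality $\bigl(\tfrac{1+z}{2}\bigr)^2\le yz$, $z=t+(1-t)y$, that the paper also arrives at. The only difference is the final verification: the paper shows $t\mapsto (t+(1-t)y)^{1/2}+(t+(1-t)y)^{-1/2}$ is decreasing and applies AM--GM at $t=0$, whereas you use the explicit factorization $4yz-(1+z)^2=(y-1)\bigl[(1-t)(3+t)y+(1+t)^2\bigr]\ge 0$, which checks out and is equally valid (a minor quibble: $Y$ and $Z$ are functions of $C$ rather than polynomials in $C$, but they still commute, so your simultaneous diagonalization step stands).
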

\begin{proof}
We have Inequality (\ref{heron}) is equivalent to the following 
\begin{align*}
d_l(A\sharp B, H_t(A, B)) & = d_l(A\sharp B, tA\sharp B + (1-t)A\nabla B) \\
& = d_l(C^{1/2}, tC^{1/2}+(1-t)(1\nabla C^{1/2})) \\
& = d_l (I, t+(1-t)C^{1/2}\nabla C^{-1/2}) \\
&= \log \det \frac{(t+(1-t)C^{1/2}\nabla C^{-1/2})^{1/2}+ (t+(1-t)C^{1/2}\nabla C^{-1/2}}{2} \\
& \le \log \det C^{1/2}\nabla C^{-1/2}.
\end{align*}
The last inequality follows from the fact that for $x \ge 1$ the function $h(t)= (t+(1-t)x)^{1/2}-(t+(1-t)x)^{-1/2}$ has a non-positive derivative and hence, is decreasing. Therefore, $h(t)$ attains the maximum value $2x$ at $t=0$.  
\end{proof}

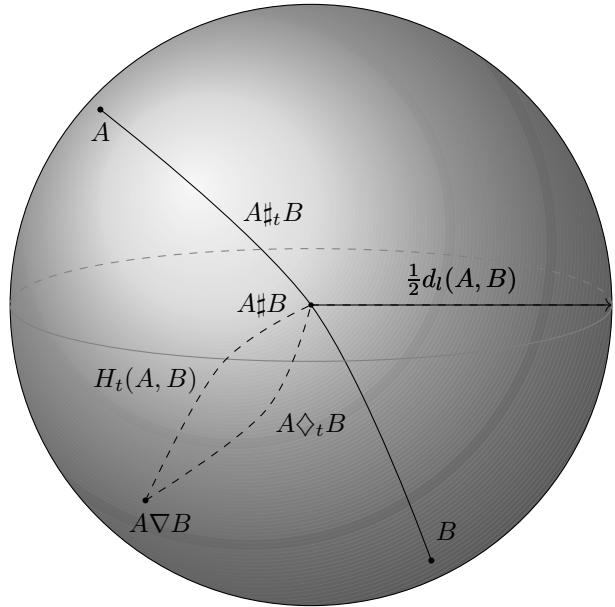
\begin{figure}
    \centering
  \begin{tikzpicture}
  \shade[ball color = gray!40!white, opacity = 0.6] (0,0) circle (4cm);
  \draw (0,0) circle (4cm);
  \fill[fill=black] (0,0) circle (1pt);
  \draw[->] (0,0) -- node[above]{$\frac{1}{2}d_l(A,B)$} (4,0);
  \node at (-0.65 ,0) {$A\sharp B$};
  \node at (-2.8 ,2.3) {$A$};
   \node at (1.8 ,-3.0) {$B$};
   \draw plot [smooth, tension=0.6] coordinates { (-2.8 ,2.6)  (0,0) (1.6 ,-3.4)};
   \node at (-0.5,1.2) {$A\sharp_t B$};
\draw[gray] (-4,0) arc (180:360:4 and 0.75);
  \draw[gray, dashed] (4,0) arc (0:180:4 and 0.75);
  \fill[black] (-2.8,2.6) circle (0.04cm);
  \fill[black] (1.6 ,-3.4) circle (0.04cm);
   \draw[dashed] (0,0 ) -- node[above]{$\frac{1}{2}d_l(A,B)$} (4,0);
    \node at (-2.0 ,-2.9) {$A\nabla B$};
    \draw [dashed] plot [smooth, tension=0.6] coordinates { (-2.2 ,-2.6) (-1.2,-0.8) (0,0)};
    \draw [dashed] plot [smooth, tension=0.6] coordinates { (-2.2 ,-2.6) (-.6,-1.4) (0,0)};
    \node at (-2.2,-1) {$H_t(A,B)$};
    \node at (0,-1.6) {$A\diamondsuit_t B$};
     \fill[black](-2.2 ,-2.6) circle (0.04cm);
\end{tikzpicture}
    \caption{Graphical depiction of the geodesic curve connecting $A$ and $B$, $A\sharp_t B$ and the sphere of radius $\frac{1}{2}d_l(A,B)$ that contains the curves $H_t(A,B)$ and $A\diamondsuit_t B$ connecting $A\sharp B$ and $A\nabla B$.}
    \label{fig:my_label}
\end{figure}


\begin{thebibliography}{1}
\bibitem{ArakiAude}
K. M.~R. Audenaert.
\newblock On the {A}raki-{L}ieb-{T}hirring inequality.
\newblock {\it Int. J. Inf. Sys. Sci.} \newblock 
4(1):78-83, 2008.


\bibitem{Audenaert2}
K. M. R. Audenaert.
\newblock In-betweenness, a geometrical monotonicity property for operator
  means.
\newblock {\em Linear Algebra Appl.} 438(4):1769-1778, 2013.
\newblock 16th \{ILAS\} Conference Proceedings, Pisa 2010.


\bibitem{BhatiaLim} R. Bhatia, T. Jain, Y. Lim.
\newblock On the Bures-Wasserstein distance between positive definite matrices.
\newblock {\it Expositiones Mathematicae}. DOI10.1016/j.exmath.2018.01.002

\bibitem{BJL} R. Bhatia, T. Jain, Y. Lim.
\newblock Inequalities for the Wasserstein mean of positive definite matrices, 	arXiv:1803.03357.



\bibitem{bhagwat} K.~V. Bhagwat, R.~Subramanian.
\newblock Inequalities between means of positive operators.
\newblock {\it Math. Proc. Camb. Phil.
  Soc.} 83(5):393-401, 1978.
  
\bibitem{Bhatia4}
R.~Bhatia, Y.~Lim, T. Yamazaki,
\newblock{Some norm inequalities for matrix means.}
\newblock {\it Linear Algebra Appl.} 501:112-122, 2016.

\bibitem{hoa07} T. H. Dinh,
\newblock {Some inequalities for the matrix Heron mean.}
\newblock {\it Linear Algebra Appl.}
528:321-330, 2017.

  \bibitem{DDF}
T. H. Dinh, R. Dumitru, J. A. Franco.
\newblock On the monotonicity of weighted power means for matrices.
\newblock {\em Linear Algebra Appl.} 527:128-140, 2017.

  \bibitem{DDF2}
T. H. Dinh, R. Dumitru, J. A. Franco.
\newblock Non-Linear Interpolation of the Harmonic-Geometric-Arithmetic Matrix Means.
\newblock Submitted, 2018.

\bibitem{HK} T. H. Dinh, B. K. T. Vo, T. Y. Tam. 
\newblock In-sphere property and reverse inequalities for matrix means. 
\newblock {\it To appear in Elect. J. Linear Algebra}. 2019.

\bibitem{Kubo-Ando} F.~Kubo, T.~Ando. Means of positive linear operators. {\it Math. Ann.} 246(3):205-224, 1980.


\bibitem{2016arXiv161103449S}
D.~{Spehner}, F.~{Illuminati}, M.~{Orszag}, W.~{Roga}.
\newblock {Geometric measures of quantum correlations with Bures and Hellinger distances}.
\newblock {\em ArXiv e-prints}, November 2016.



\bibitem{sra}
S. Sra.
\newblock A new metric on the manifold of kernel matrices with application to matrix geometric
means.
\newblock {\it In Advances in Neural Information Processing Systems (NIPS)}, December 2012.
\end{thebibliography}
\end{document}